\providecommand{\U}[1]{\protect\rule{.1in}{.1in}}
\newtheoremstyle{theorem}
{10pt}		
{10pt}
{\sl}
{\parindent}
{\bf}
{. }
{ }
{}
\theoremstyle{theorem}
\newtheorem{theorem}{Theorem}
\newtheorem{corollary}[theorem]{Corollary}
\newtheoremstyle{defi}
{10pt}		
{10pt}
{\rm}
{\parindent}
{\bf}
{. }
{ }
{}
\theoremstyle{defi}
\newtheorem{remark}{Remark}
\numberwithin{equation}{section}
\begin{document}

\title{A generalization of Ostrowski type inequality for mappings whose second
derivatives belong to L$_{1}\left(  a,b\right)  $ and applications}
\author{A. Qayyum$^{1}$, Ibrahima Faye$^{1}$, M. Shoaib$^{2}$ and M.A. Latif\\$^{1}$ Department of Fundamental and Applied Sciences, Universiti \\Teknologi Petronas, Malaysia.\\atherqayyum@gmail.com, ibrahima\_faye@petronas.com.my\\[2pt] $^{2}$ University of Hail, Department of Mathematics, PO BOX 2440,\\Kingdom of Saudi Arabia safridi@gmail.com}
\maketitle

\begin{abstract}
In this paper, we will improve and generalize inequality of Ostrowski type for
mappings whose second derivatives belong to L$_{1}\left(  a,b\right)  $ . Some
well known inequalities can be derived as special cases. In addition,
perturbed mid-point inequality and perturbed trapezoid inequality are also
obtained. The obtained inequalities have immediate applications in numerical
integration where new estimates are obtained for the remainder term of the
trapezoid and midpoint formula. Applications to some special means are also investigated.

\textbf{AMS Subject Classification:} 26D15, 41A55, 41A80, 65C50.

\textbf{Key Words and Phrases:} Ostrowski inequality, special means, numerical integration

\end{abstract}

\section{Introduction}

The development of inequalities has been established with the publication of
the books by Hardy, Littlewood and Polya \cite{1} in 1934, Beckenbach and
Bellman \cite{2} in 1961 and by Mitrinovi%
\'{}%
c, Pecari%
\'{}%
c and Fink \cite{3}-\cite{4} in 1991. The publication of later has resulted to
bring forward some new integral inequalities involving functions with bounded
derivatives that measure bounds on the deviation of functional value from its
mean value namely, Ostrowski inequality \cite{5}. This Ostrowski type
inequality has powerful applications in numerical integration, probability and
optimization theory, stochastic, statistics, information and integral operator theory.

During the last few years, many researchers focused their attention on the
study and generalizations \cite{6}, \cite{7}, \cite{8}, \cite{13}, \cite{14}
\ and \cite{15} of the Ostrowski inequality. In \cite{9}, Rafiq et.al proved
their results for second differentiable mappings by using Ostrowski-Gr\"{u}ss
type inequality. We prove our result for L$_{1}$ norm by using only Ostrowski
type inequality.

Ostrowski \cite{5} proved the classical integral inequality which is stated as:

\begin{theorem}
Let \ \ $f\ $: $I\subseteq$ $%
\mathbb{R}
\rightarrow%
\mathbb{R}
$ \ be\ a differentiable mapping on $I^{\circ}$($I^{\circ}$ is the interior of
$I$) and let $a,b\in I^{\circ}$with $a<b.$ If \ \ $f^{\text{ }\prime}\ $:
$\left(  a,b\right)  \rightarrow%
\mathbb{R}
$ \ is bounded on $\left(  a,b\right)  $ i.e. $\left\Vert f^{\prime
}\right\Vert _{\infty}=\underset{t\in\left[  a,b\right]  }{\sup}\left\vert
f^{\prime\prime}\left(  t\right)  \right\vert <\infty,$ then%
\begin{equation}
\left\vert f(x)-\frac{1}{b-a}\int\limits_{a}^{b}f(t)dt\right\vert \leq\left[
\frac{1}{4}+\frac{\left(  x-\frac{a+b}{2}\right)  ^{2}}{\left(  b-a\right)
^{2}}\right]  \left(  b-a\right)  \left\Vert f^{\prime}\right\Vert _{\infty}
\label{1}%
\end{equation}
for all $x\in\left[  a,b\right]  $. The\ constant$\ \frac{1}{4}\;$is sharp in
the sense that it can not be replaced by a smaller one.
\end{theorem}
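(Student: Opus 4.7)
The plan is to represent the quantity $f(x) - \frac{1}{b-a}\int_a^b f(t)\,dt$ as a single integral against a Peano-type kernel and then bound it by the uniform norm of $f^{\prime}$. I would define
$$K(x,t) = \begin{cases} t-a, & a \leq t \leq x, \\ t-b, & x < t \leq b, \end{cases}$$
and verify by integrating by parts separately on $[a,x]$ and $[x,b]$ that
$$\int_a^b K(x,t)\, f^{\prime}(t)\, dt = (b-a)\, f(x) - \int_a^b f(t)\, dt;$$
the two boundary contributions at $t=x$ combine as $(x-a)f(x) + (b-x)f(x) = (b-a)f(x)$, while the endpoint terms at $t=a$ and $t=b$ vanish, leaving the integral of $-f$ on $[a,b]$.

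After dividing by $b-a$ and taking absolute values, the standard estimate $\bigl|\int_a^b K(x,t)\, f^{\prime}(t)\, dt\bigr| \leq \|f^{\prime}\|_\infty \int_a^b |K(x,t)|\, dt$ reduces the problem to computing the $L^1$ norm of $K(x,\cdot)$. Since $K$ has constant sign on each subinterval ($K \geq 0$ on $[a,x]$, $K \leq 0$ on $[x,b]$), the integral evaluates to $\tfrac{1}{2}\bigl[(x-a)^2 + (b-x)^2\bigr]$.

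The rest is purely algebraic: writing $x - a = \tfrac{b-a}{2} + (x - \tfrac{a+b}{2})$ and $b - x = \tfrac{b-a}{2} - (x - \tfrac{a+b}{2})$, the cross terms cancel and one obtains $(x-a)^2 + (b-x)^2 = \tfrac{(b-a)^2}{2} + 2\bigl(x - \tfrac{a+b}{2}\bigr)^2$. Dividing by $2(b-a)$ yields exactly the factor $\bigl[\tfrac{1}{4} + (x - \tfrac{a+b}{2})^2/(b-a)^2\bigr](b-a)$ appearing in the stated bound.

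The only real obstacle is the sharpness claim for the constant $\tfrac{1}{4}$. For this I would specialize to $x = \tfrac{a+b}{2}$ and test with the extremal function $f(t) = |t - \tfrac{a+b}{2}|$, or a smooth mollification of it, since the hypothesis only requires differentiability with bounded derivative. A direct computation shows that both sides agree at leading order for this choice, so $\tfrac{1}{4}$ cannot be replaced by anything smaller. Everything else reduces to one integration by parts and routine algebra.
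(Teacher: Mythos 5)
Your proof is correct and is the standard Montgomery-identity argument: the kernel $K(x,t)$ you define, the integration by parts yielding $(b-a)f(x)-\int_a^b f(t)\,dt$, the computation $\int_a^b|K(x,t)|\,dt=\tfrac12[(x-a)^2+(b-x)^2]$, and the algebraic identity $(x-a)^2+(b-x)^2=\tfrac{(b-a)^2}{2}+2\bigl(x-\tfrac{a+b}{2}\bigr)^2$ are all right, and your sharpness test with $x=\tfrac{a+b}{2}$ and (a mollification of) $f(t)=\bigl|t-\tfrac{a+b}{2}\bigr|$ does show that $\tfrac14$ cannot be lowered. The paper states this result as the classical Ostrowski theorem with only a citation and gives no proof, so there is nothing to compare against directly; note, though, that your first-order Peano-kernel approach is exactly the template the authors later adapt (with a piecewise quadratic kernel and a second integration by parts) to prove their main Theorem 5, so your argument is fully in the spirit of the paper.
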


In 1976, Milovanovi\'{c} et al. proved a generalization of Ostrowski's
inequality for twice differentiable mappings \cite{10}.

\begin{theorem}
Let \ \ $f\ $: $\left[  a,b\right]  \rightarrow%
\mathbb{R}
$ \ be\ a twice differentiable such that $f^{\text{ }\prime\prime}:$ $\left(
a,b\right)  \rightarrow%
\mathbb{R}
$ is bounded on $\left(  a,b\right)  ,$ i.e $\left\Vert f^{\prime\prime
}\right\Vert _{\infty}=\underset{t\in\left[  a,b\right]  }{\sup}\left\vert
f^{\prime\prime}\left(  t\right)  \right\vert <\infty$. Then the inequality
holds:%
\begin{align*}
&  \left\vert \frac{1}{2}\left[  f(x)+\frac{\left(  x-a\right)  f\left(
a\right)  +\left(  b-x\right)  f\left(  b\right)  }{b-a}\right]  -\frac
{1}{b-a}\int\limits_{a}^{b}f(t)dt\right\vert \\
&  \leq\frac{\left\Vert f^{\prime}\right\Vert _{\infty}}{4}\left(  b-a\right)
^{2}\left[  \frac{1}{12}+\frac{\left(  x-\frac{a+b}{2}\right)  ^{2}}{\left(
b-a\right)  ^{2}}\right]
\end{align*}
for all $x\in\left[  a,b\right]  .$
\end{theorem}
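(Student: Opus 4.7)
The plan is to prove the identity by the Peano-kernel method, then take the sup-norm estimate. The candidate kernel, chosen so that two integrations by parts produce exactly the combination on the left-hand side, is
\[
K(x,t)=\begin{cases}\dfrac{(t-a)(t-x)}{2}, & a\le t\le x,\\[4pt] \dfrac{(t-b)(t-x)}{2}, & x<t\le b.\end{cases}
\]
The first thing I would do is verify the integral identity
\[
\int_{a}^{b} K(x,t)\,f''(t)\,dt
=\int_{a}^{b} f(t)\,dt-\frac{b-a}{2}f(x)-\frac{(x-a)f(a)+(b-x)f(b)}{2},
\]
which is done by splitting the integral at $x$ and integrating by parts twice on each subinterval. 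I expect the $f'(x)$ boundary contributions from the left and right pieces to cancel exactly because the factor $(t-x)$ kills the kernel at $t=x$; this is the reason for the specific linear factors in $K$, and it is the step that needs a little care in bookkeeping.

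Once the identity is established, dividing by $b-a$ yields the left-hand side of the theorem, so the standard majorization gives
\[
\Bigl|\text{LHS}\Bigr|\le \frac{\|f''\|_{\infty}}{b-a}\int_{a}^{b}|K(x,t)|\,dt.
\]
On $[a,x]$ the kernel is non-positive (since $t-a\ge 0$ and $t-x\le 0$) and likewise on $(x,b]$, so the absolute value is easy to insert. A short computation then gives
\[
\int_{a}^{b} |K(x,t)|\,dt=\frac{(x-a)^{3}+(b-x)^{3}}{12}.
\]

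The remaining step is purely algebraic. Setting $h=(b-a)/2$ and $s=x-(a+b)/2$, so that $x-a=h+s$ and $b-x=h-s$, I would expand $(h+s)^{3}+(h-s)^{3}=2h^{3}+6hs^{2}$, divide by $12(b-a)=24h$, and recognize the result as $\frac{(b-a)^{2}}{4}\bigl[\frac{1}{12}+\frac{(x-(a+b)/2)^{2}}{(b-a)^{2}}\bigr]$. Multiplying by $\|f''\|_{\infty}$ produces exactly the bound claimed in the theorem.

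The main obstacle is really just the first step: guessing the right kernel. The pure Peano kernel $(t-a)^{2}/2$, $(t-b)^{2}/2$ leaves a residual $\frac{(x-a)^{2}-(b-x)^{2}}{2}f'(x)$ term, and the linear corrections $-\frac{x-a}{2}(t-a)$ and $+\frac{b-x}{2}(t-b)$, which simplify to the factored form above, are chosen precisely so that this residual vanishes while simultaneously generating the weighted boundary combination $(x-a)f(a)+(b-x)f(b)$. Once the kernel is in hand, the rest of the argument is mechanical.
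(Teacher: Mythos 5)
Your proof is correct, and it is worth noting that the paper itself offers no proof of this statement: it is quoted as background (Theorem 2, attributed to Milovanovi\'c et al.\ via reference [10]). Your Peano--kernel argument with
\[
K(x,t)=\tfrac{1}{2}(t-a)(t-x)\ \text{ on } [a,x],\qquad K(x,t)=\tfrac{1}{2}(t-b)(t-x)\ \text{ on } (x,b],
\]
is the standard route and checks out in every detail: the kernel vanishes at $t=a$, $t=x$, $t=b$, so the $f'$ boundary terms from the double integration by parts disappear entirely (not merely cancel in pairs), the derivative values $K_t(x,x^\pm)=\pm\frac{x-a}{2},\mp\frac{b-x}{2}$ produce exactly the weighted combination $\frac{1}{2}\bigl[(b-a)f(x)+(x-a)f(a)+(b-x)f(b)\bigr]$, and the computation $\int_a^b|K|\,dt=\frac{(x-a)^3+(b-x)^3}{12}$ together with your $h,s$ substitution reproduces the stated constant $\frac{(b-a)^2}{48}+\frac{1}{4}\bigl(x-\frac{a+b}{2}\bigr)^2$. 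This is also the same technique the paper uses for its own main result (Theorem 5), just with a different piecewise kernel. One remark: the theorem as printed writes $\Vert f'\Vert_\infty$ in the bound, but the hypothesis and the inequality only make sense with $\Vert f''\Vert_\infty$; your derivation correctly produces the $\Vert f''\Vert_\infty$ version, so you have in effect also identified a typographical error in the statement.
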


Dragomir and Wang \cite{7} proved (\ref{1}) for $f^{\text{ }\prime}\in
L_{1}\left[  a,b\right]  $ $,$ as follows:

\begin{theorem}
Let \ \ $f\ $:$I\subseteq$ $%
\mathbb{R}
\rightarrow%
\mathbb{R}
$ \ be\ a differentiable mapping in $I^{\circ}$ and $a,b\in I^{\circ}$with
$a<b.$ If $f^{\text{ }\prime}\in L_{1}\left[  a,b\right]  $, then the
inequality holds%
\begin{equation}
\left\vert \text{ }f(x)-\frac{1}{b-a}\int\limits_{a}^{b}f(t)dt\right\vert
\leq\left[  \frac{1}{2}+\frac{\left\vert x-\frac{a+b}{2}\right\vert }%
{b-a}\right]  \left\Vert f^{\prime}\right\Vert _{1} \label{2}%
\end{equation}
for all $x\in\left[  a,b\right]  $.
\end{theorem}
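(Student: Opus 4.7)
The plan is to use the Peano-kernel technique that underlies most Ostrowski-type arguments. I would introduce the piecewise-linear kernel
\[
p(x,t)=\begin{cases} t-a, & t\in[a,x],\\ t-b, & t\in(x,b], \end{cases}
\]
and, by integrating by parts on each of the subintervals $[a,x]$ and $[x,b]$ and then adding, establish the identity
\[
\int_a^b p(x,t)\,f'(t)\,dt=(b-a)\,f(x)-\int_a^b f(t)\,dt.
\]
Dividing through by $b-a$ produces the Montgomery-type representation of the quantity $f(x)-\frac{1}{b-a}\int_a^b f(t)\,dt$ whose absolute value we wish to control.

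Next I would take absolute values inside the integral and bound the kernel uniformly in $t$. Since $|t-a|\le x-a$ on $[a,x]$ and $|t-b|\le b-x$ on $(x,b]$, the kernel satisfies $|p(x,t)|\le \max\{x-a,\,b-x\}$ throughout $[a,b]$. A short algebraic rearrangement gives
\[
\max\{x-a,\,b-x\}=\frac{b-a}{2}+\left|x-\frac{a+b}{2}\right|,
\]
so pulling this essential supremum out of the integral against $|f'(t)|$ yields
\[
\left|f(x)-\frac{1}{b-a}\int_a^b f(t)\,dt\right|\le \frac{\max\{x-a,\,b-x\}}{b-a}\,\|f'\|_1,
\]
which is exactly the target inequality (\ref{2}).

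The main obstacle is purely technical: justifying the integration-by-parts step under the weak hypothesis $f'\in L_1[a,b]$, rather than under continuity of $f'$ or explicit absolute continuity of $f$. Since $f$ is assumed differentiable on an open interval containing $[a,b]$ and $f'\in L_1[a,b]$, the Newton--Leibniz formula $f(\beta)-f(\alpha)=\int_\alpha^\beta f'(s)\,ds$ is available on each subinterval $[a,x]$ and $[x,b]$, and this is enough to carry through the integration-by-parts identity; alternatively, one may approximate $f$ by smooth functions and pass to the limit via dominated convergence, using that the kernel $p(x,\cdot)$ is bounded. Once this regularity issue is settled, the remainder of the argument is just the direct kernel estimate above.
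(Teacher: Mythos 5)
Your argument is correct: the Montgomery identity $\int_a^b p(x,t)f'(t)\,dt=(b-a)f(x)-\int_a^b f(t)\,dt$ with the kernel $p(x,t)=t-a$ on $[a,x]$ and $t-b$ on $(x,b]$, followed by the uniform bound $|p(x,t)|\le\max\{x-a,b-x\}=\frac{b-a}{2}+\left|x-\frac{a+b}{2}\right|$, gives exactly (\ref{2}), and your remark that differentiability on $I^{\circ}$ together with $f'\in L_{1}$ suffices for the Newton--Leibniz step is sound. The paper itself states this theorem without proof, quoting it from Dragomir and Wang, and your proof is essentially the standard one from that source.
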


J. Roumeliotis \cite{11}, presented product inequalities and weighted
quadrature. The weighted inequlity was also obtained in Lebesgue spaces
involving first derivative of the function, which is given by%
\begin{align}
&  \left\vert \text{ }\frac{1}{b-a}\int\limits_{a}^{b}w\left(  t\right)
f(t)dt-m\left(  a,b\right)  f\left(  x\right)  \right\vert \label{3}\\
&  \leq\frac{1}{2}\left[  m\left(  a,b\right)  +\left\vert m\left(
a,x\right)  -m\left(  x,b\right)  \right\vert \right]  \left\Vert
f^{\prime\prime}\right\Vert _{1}\nonumber
\end{align}
Barnett et,al, \cite{12} proved an inequality of Ostrowski type for twice
differentiable mappings which is in terms of the $L_{1}$ norm of the second
derivative $f^{\prime\prime}$. The main result from \cite{12} is pointed out
in the theorem below:

\begin{theorem}
Let \ \ $f\ $: $\left[  a,b\right]  \rightarrow%
\mathbb{R}
$ \ be\ continuous on $\left[  a,b\right]  $ and twice differentiable in
$\left(  a,b\right)  $ and $f^{\text{ }\prime\prime}\in L_{1}\left(
a,b\right)  .$ Then the inequality holds%
\begin{align}
&  \left\vert \text{ }f(x)-\frac{1}{b-a}\int\limits_{a}^{b}f(t)dt-\left(
x-\frac{a+b}{2}\right)  f^{\text{ }\prime}\left(  x\right)  \right\vert
\label{4}\\
&  \leq\frac{1}{2\left(  b-a\right)  }\left(  \left\vert x-\frac{a+b}%
{2}\right\vert +\frac{1}{2}\left(  b-a\right)  \right)  ^{2}\left\Vert
f^{\prime\prime}\right\Vert _{1}\nonumber\\
&  \leq\frac{b-a}{2}\left\Vert f^{\prime\prime}\right\Vert _{1}\nonumber
\end{align}
for all $x\in\left[  a,b\right]  $.
\end{theorem}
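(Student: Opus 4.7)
The plan is to prove this via the standard Peano kernel approach, using an appropriate piecewise-polynomial kernel tailored to produce the given three-term left-hand side after integrating against $f''$.

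First I would introduce the kernel
\[
K(x,t)=\begin{cases}\tfrac{1}{2}(t-a)^{2},& t\in[a,x],\\[2pt] \tfrac{1}{2}(t-b)^{2},& t\in(x,b],\end{cases}
\]
and establish the Peano identity
\[
\int_{a}^{b}K(x,t)\,f''(t)\,dt=\Bigl(x-\tfrac{a+b}{2}\Bigr)(b-a)f'(x)-(b-a)f(x)+\int_{a}^{b}f(t)\,dt .
\]
This is the heart of the argument, and I would obtain it by integrating by parts twice on each subinterval $[a,x]$ and $[x,b]$ separately: the first integration by parts pulls $f''$ down to $f'$ and produces the boundary term $\tfrac{1}{2}[(x-a)^{2}-(x-b)^{2}]f'(x)$, and since $(x-a)^{2}-(x-b)^{2}=(2x-a-b)(b-a)$, this collapses to the $(x-\tfrac{a+b}{2})(b-a)f'(x)$ term. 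A second integration by parts on each piece turns $f'$ into $f$ and, because the kernel vanishes at $t=a$ and $t=b$, produces only the interior boundary values $-(b-a)f(x)$ together with the integral $\int_{a}^{b}f(t)\,dt$. Dividing by $(b-a)$ and rearranging yields
\[
f(x)-\tfrac{1}{b-a}\!\int_{a}^{b}\!f(t)\,dt-\Bigl(x-\tfrac{a+b}{2}\Bigr)f'(x)=-\tfrac{1}{b-a}\!\int_{a}^{b}K(x,t)f''(t)\,dt.
\]

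Next I would pass to absolute values and apply the obvious bound for an $L_{\infty}$--$L_{1}$ pairing:
\[
\Bigl|\!\int_{a}^{b}\!K(x,t)f''(t)\,dt\Bigr|\le \sup_{t\in[a,b]}|K(x,t)|\cdot\|f''\|_{1}.
\]
On $[a,x]$ the kernel $\tfrac{1}{2}(t-a)^{2}$ is maximized at $t=x$ with value $\tfrac{1}{2}(x-a)^{2}$, and on $[x,b]$ the kernel $\tfrac{1}{2}(t-b)^{2}$ is maximized at $t=x$ with value $\tfrac{1}{2}(b-x)^{2}$. Hence $\sup_{t}|K(x,t)|=\tfrac{1}{2}\max\{x-a,b-x\}^{2}$. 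Using the standard identity $\max\{x-a,b-x\}=\tfrac{b-a}{2}+\bigl|x-\tfrac{a+b}{2}\bigr|$ converts this to $\tfrac{1}{2}\bigl(\bigl|x-\tfrac{a+b}{2}\bigr|+\tfrac{b-a}{2}\bigr)^{2}$, which, after division by $(b-a)$, is exactly the first bound in (\ref{4}).

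The second bound is then immediate from $\bigl|x-\tfrac{a+b}{2}\bigr|\le\tfrac{b-a}{2}$, giving $\bigl|x-\tfrac{a+b}{2}\bigr|+\tfrac{b-a}{2}\le b-a$ and therefore $\tfrac{1}{2(b-a)}(b-a)^{2}=\tfrac{b-a}{2}$. The only genuine obstacle is guessing the right kernel $K$; once that is in hand, the two integrations by parts and the $L_{\infty}$--$L_{1}$ estimate are routine, and the $\max\{x-a,b-x\}$ simplification is the one algebraic step where care is needed.
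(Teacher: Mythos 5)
Your proof is correct and follows essentially the same route as the paper: the paper establishes this result as the $h=0$ case of its Theorem 5, whose kernel $K(x,t)$ reduces at $h=0$ to exactly your piecewise kernel $\tfrac12(t-a)^2$ on $[a,x]$ and $\tfrac12(t-b)^2$ on $(x,b]$, followed by the same $L_\infty$--$L_1$ estimate and the identity $\max\{x-a,\,b-x\}=\tfrac{b-a}{2}+\bigl|x-\tfrac{a+b}{2}\bigr|$. Your integration-by-parts computation and both bounds check out.
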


Motivated and inspired by the work of the Barnett et.al \cite{12}, we will
establish a new generalized inequality. Some other interesting inequalities
are also presented as special cases. In the last, we will present applications
for some special means and in numerical integration.

\section{Main Results}

\begin{theorem}
\textit{Let \ }$\mathit{\ }f:[a,b]\rightarrow%
\mathbb{R}
$ \textit{be continuous on }$[a,b]$ and twice differentiable on $\left(
a,b\right)  $ with second derivative and $f^{\text{ }\prime\prime}:\left(
a,b\right)  \rightarrow%
\mathbb{R}
,$ for all $x\in\left[  a+h\frac{b-a}{2},b-h\frac{b-a}{2}\right]  ,$ \ it
follows that%
\begin{align}
&  \left\vert
\begin{array}
[c]{c}%
\left(  1-h\right)  \text{ }f(x)-\left(  1-h\right)  \left(  x-\frac{a+b}%
{2}\right)  f^{\text{ }\prime}\left(  x\right)  +\frac{h}{2}\left(  f\left(
a\right)  +f\left(  b\right)  \right) \\
\\
-\frac{h^{2}\left(  b-a\right)  }{8}\left(  f^{\text{ }\prime}\left(
b\right)  -f^{\text{ }\prime}\left(  a\right)  \right)  -\frac{1}{b-a}%
\int\limits_{a}^{b}f(t)dt
\end{array}
\right\vert \label{5}\\
&  \leq\frac{1}{2\left(  b-a\right)  }\left[  \frac{1}{2}\left(  b-a\right)
\left(  1-h\right)  +\left\vert x-\frac{a+b}{2}\right\vert \right]
^{2}\left\Vert f^{\text{ }\prime\prime}\right\Vert _{1}\nonumber\\
&  \leq\frac{\left(  b-a\right)  }{2}\left(  1-\frac{h}{2}\right)
^{2}\left\Vert f^{\text{ }\prime\prime}\right\Vert _{1}\nonumber
\end{align}
for all $x\in\left[  a+h\frac{b-a}{2},b-h\frac{b-a}{2}\right]  $ and
$h\in\left[  0,1\right]  .$
\end{theorem}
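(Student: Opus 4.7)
The plan is to prove (\ref{5}) by a Peano-kernel argument in the spirit of Barnett et al.\ \cite{12}. For $x\in[\alpha,\beta]$ with $\alpha:=a+\tfrac{h(b-a)}{2}$ and $\beta:=b-\tfrac{h(b-a)}{2}$, I would introduce the piecewise-quadratic kernel
\[
K(x,t)=\begin{cases}\tfrac{1}{2}(t-\alpha)^{2}, & t\in[a,x],\\[2pt]\tfrac{1}{2}(t-\beta)^{2}, & t\in[x,b],\end{cases}
\]
which reduces to the Barnett et al.\ kernel when $h=0$ and is the natural generalisation incorporating the parameter $h$.

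The first step would be to establish the exact representation $\int_{a}^{b}K(x,t)f''(t)\,dt=-(b-a)\,L(x,h)$, where $L(x,h)$ denotes the quantity inside the absolute value on the left-hand side of (\ref{5}). This is done by integrating by parts twice on each of the subintervals $[a,x]$ and $[x,b]$. Since $K_{tt}\equiv1$ on each piece, the second integration by parts reproduces $\int_{a}^{b}f$; boundary contributions from $K f'$ at the outer endpoints evaluate to $\tfrac{h^{2}(b-a)^{2}}{8}f'(a)$ and $\tfrac{h^{2}(b-a)^{2}}{8}f'(b)$; boundary contributions from $K_{t}f$ at those points produce the $\tfrac{h(b-a)}{2}[f(a)+f(b)]$ term; and the inner jumps at $t=x$ combine, via the identities $\alpha+\beta=a+b$ and $\beta-\alpha=(1-h)(b-a)$, into the $f(x)$ and $f'(x)$ contributions with their prescribed coefficients. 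Dividing by $-(b-a)$ and applying the elementary estimate $\left|\int_{a}^{b}Kf''\right|\leq\|K(x,\cdot)\|_{\infty}\|f''\|_{1}$ yields $|L(x,h)|\leq(b-a)^{-1}\|K(x,\cdot)\|_{\infty}\|f''\|_{1}$.

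It remains to evaluate $\|K(x,\cdot)\|_{\infty}$. Since $K\geq0$ and each piece is a parabola with minimum at $\alpha$ or $\beta$, the supremum is attained at one of the endpoints $t=a,x,b$. For $x\in[\alpha,\beta]$ a direct comparison shows the dominant value is $\tfrac{1}{2}\left(\left|x-\tfrac{a+b}{2}\right|+\tfrac{(1-h)(b-a)}{2}\right)^{2}$, achieved at $t=x$, which gives the first bound in (\ref{5}). The second bound then follows immediately from the trivial estimate $\left|x-\tfrac{a+b}{2}\right|\leq\tfrac{b-a}{2}$, since $\tfrac{(1-h)(b-a)}{2}+\tfrac{b-a}{2}=(1-\tfrac{h}{2})(b-a)$. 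The main obstacle will be the careful bookkeeping in the double integration by parts: eight boundary contributions (from $K$ and $K_{t}$ evaluated at $a,x^{-},x^{+},b$) must combine with the correct signs and coefficients to reproduce the rather intricate perturbation terms $\tfrac{h}{2}[f(a)+f(b)]$ and $-\tfrac{h^{2}(b-a)}{8}[f'(b)-f'(a)]$ on the left-hand side of (\ref{5}); a secondary subtlety is confirming that the piecewise maximum of $|K|$ is indeed attained at $t=x$ rather than at the outer endpoints, which is precisely what the restriction $x\in[\alpha,\beta]$ is designed to guarantee.
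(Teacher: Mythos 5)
Your proposal follows essentially the same route as the paper: the identical kernel $K(x,t)$, the same integral identity (which the paper simply asserts as its equation (\ref{6}) while you indicate the double integration by parts that produces it), the same estimate $\bigl|\int_a^b K f''\bigr|\le \Vert K(x,\cdot)\Vert_\infty \Vert f''\Vert_1$ after splitting at $t=x$, and the same evaluation of the supremum as $\tfrac12\bigl[\tfrac{(1-h)(b-a)}{2}+\bigl|x-\tfrac{a+b}{2}\bigr|\bigr]^2$. The one subtlety you flag --- whether the maximum of $K(x,\cdot)$ sits at $t=x$ rather than at $t=a$ or $t=b$ --- is passed over in the paper exactly as in your sketch; be aware that $x\in[\alpha,\beta]$ alone does not settle it, since for $h>\tfrac12$ and $x$ near $\tfrac{a+b}{2}$ one has $K(x,a)=K(x,b)=\tfrac{h^{2}(b-a)^{2}}{8}>\tfrac12(x-\alpha)^{2}$, so the stated bound (and the paper's) actually requires this extra comparison and fails as written in that range (as the degenerate case $h=1$, where the claimed bound collapses to zero, already shows).
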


\begin{proof}
Let us define the mapping $K:[a,b]^{2}\rightarrow%
\mathbb{R}
$ by%
\[
K\left(  x,t\right)  =\left\{
\begin{array}
[c]{c}%
\frac{1}{2}\left[  t-\left(  a+h\frac{b-a}{2}\right)  \right]  ^{2}\text{, if
}t\in\left[  a,x\right] \\
\\
\frac{1}{2}\left[  t-\left(  b-h\frac{b-a}{2}\right)  \right]  ^{2}\text{, if
}t\in\left(  x,b\right]
\end{array}
\right.
\]
The proof uses the following identity:%
\begin{align}
\int\limits_{a}^{b}f(t)dt  &  =\left(  b-a\right)  \left(  1-h\right)
f(x)-\left(  b-a\right)  \left(  1-h\right)  \left(  x-\frac{a+b}{2}\right)
f^{\text{ }\prime}\left(  x\right) \label{6}\\
&  +h\frac{b-a}{2}\left(  f\left(  a\right)  +f\left(  b\right)  \right)
-\frac{h^{2}\left(  b-a\right)  ^{2}}{8}\left(  f^{\text{ }\prime}\left(
b\right)  -f^{\text{ }\prime}\left(  a\right)  \right)  +\int\limits_{a}%
^{b}K(x,t)f^{\text{ }\prime\prime}\left(  t\right)  dt\nonumber
\end{align}
for all $x\in\left[  a+h\frac{b-a}{2},b-h\frac{b-a}{2}\right]  .$
Using the identity (\ref{6})$,$ we have%
\begin{align}
&  \left\vert
\begin{array}
[c]{c}%
\left(  1-h\right)  f(x)-\frac{1}{\left(  b-a\right)  }\int\limits_{a}%
^{b}f(t)dt+\frac{h}{2\left(  1-h\right)  }\left(  f\left(  a\right)  +f\left(
b\right)  \right) \\
\\
-\frac{h^{2}\left(  b-a\right)  }{8}\left(  f^{\text{ }\prime}\left(
b\right)  -f^{\text{ }\prime}\left(  a\right)  \right)  -\left(  1-h\right)
\left(  x-\frac{a+b}{2}\right)  f^{\text{ }\prime}\left(  x\right)
\end{array}
\right\vert \nonumber\\
& \nonumber\\
&  =\frac{1}{\left(  b-a\right)  }\left\vert \int\limits_{a}^{b}%
K(x,t)f^{\text{ }\prime\prime}\left(  t\right)  dt\right\vert \nonumber\\
&  =\frac{1}{\left(  b-a\right)  }\left\vert \int\limits_{a}^{x}\frac{\left[
t-\left(  a+h\frac{b-a}{2}\right)  \right]  ^{2}}{2}f^{\text{ }\prime\prime
}\left(  t\right)  dt+\int\limits_{x}^{b}\frac{\left[  t-\left(  b-h\frac
{b-a}{2}\right)  \right]  ^{2}}{2}f^{\text{ }\prime\prime}\left(  t\right)
dt\right\vert \nonumber\\
&  \leq\frac{1}{\left(  b-a\right)  }\left[  \frac{\left[  x-\left(
a+h\frac{b-a}{2}\right)  \right]  ^{2}}{2}\int\limits_{a}^{x}\left\Vert
f^{\text{ }\prime\prime}\left(  t\right)  \right\Vert dt+\frac{\left[  \left(
b-h\frac{b-a}{2}\right)  -x\right]  ^{2}}{2}\int\limits_{x}^{b}\left\Vert
f^{\text{ }\prime\prime}\left(  t\right)  \right\Vert dt\right] \nonumber\\
&  \leq\frac{1}{\left(  b-a\right)  }\max\left\{  \frac{\left[  x-\left(
a+h\frac{b-a}{2}\right)  \right]  ^{2}}{2},\frac{\left[  \left(  b-h\frac
{b-a}{2}\right)  -x\right]  ^{2}}{2}\right\} \nonumber\\
&  \times\left[  \int\limits_{a}^{x}\left\Vert f^{\text{ }\prime\prime}\left(
t\right)  \right\Vert dt+\int\limits_{x}^{b}\left\Vert f^{\text{ }\prime
\prime}\left(  t\right)  \right\Vert dt\right]  \label{7}%
\end{align}
Now observe that%
\begin{align}
&  \max\left\{  \frac{\left[  x-\left(  a+h\frac{b-a}{2}\right)  \right]
^{2}}{2},\frac{\left[  \left(  b-h\frac{b-a}{2}\right)  -x\right]  ^{2}}%
{2}\right\} \label{8}\\
& \nonumber\\
&  =\frac{1}{2}\left[  \frac{1}{2}\left(  b-a\right)  \left(  1-h\right)
+\left\vert x-\frac{a+b}{2}\right\vert \right]  ^{2}.\nonumber
\end{align}
Using (\ref{8}) in (\ref{7}), we get our required result (\ref{5}).
\end{proof}

\begin{remark}
For $h=0\mathbf{,}\ \ $in (\ref{5}), we obtain Barnett's result (\ref{4}). It
shows that Barnett's result (\ref{4}) is our special case.
\end{remark}

\begin{remark}
For $h=1$ in (\ref{5}, we obtain another result:%
\begin{align}
&  \left\vert
\begin{array}
[c]{c}%
\frac{1}{2}\left(  f\left(  a\right)  +f\left(  b\right)  \right)
-\frac{\left(  b-a\right)  }{8}\left(  f^{\text{ }\prime}\left(  b\right)
-f^{\text{ }\prime}\left(  a\right)  \right)  -\frac{1}{b-a}\int%
\limits_{a}^{b}f(t)dt\\
\end{array}
\right\vert \label{9}\\
&  \leq\frac{1}{2\left(  b-a\right)  }\left[  \left\vert x-\frac{a+b}%
{2}\right\vert \right]  ^{2}\left\Vert f^{\text{ }\prime\prime}\right\Vert
_{1}\nonumber\\
&  \leq\frac{\left(  b-a\right)  }{8}\left\Vert f^{\text{ }\prime\prime
}\right\Vert _{1}.\nonumber
\end{align}
Hence for different values of $h$, we can obtain a variety of results.
\end{remark}

\begin{corollary}
If $f$ \ is as in Theorem $5$, then we have the following perturbed midpoint
inequality:%
\begin{align}
&  \left\vert
\begin{array}
[c]{c}%
\left(  1-h\right)  \text{ }f\left(  \frac{a+b}{2}\right)  +\frac{h}{2}\left(
f\left(  a\right)  +f\left(  b\right)  \right) \\
-\frac{h^{2}\left(  b-a\right)  }{8}\left(  f^{\text{ }\prime}\left(
b\right)  -f^{\text{ }\prime}\left(  a\right)  \right)  -\frac{1}{b-a}%
\int\limits_{a}^{b}f(t)dt\\
\end{array}
\right\vert \label{10}\\
&  \leq\frac{1}{8}\left(  b-a\right)  \left(  1-h\right)  ^{2}\left\Vert
f^{\text{ }\prime\prime}\right\Vert _{1}\nonumber
\end{align}
and for $h=0$, we have%
\begin{equation}
\left\vert \text{ }f\left(  \frac{a+b}{2}\right)  -\frac{1}{b-a}%
\int\limits_{a}^{b}f(t)dt\right\vert \leq\frac{\left(  b-a\right)  }%
{8}\left\Vert f^{\text{ }\prime\prime}\right\Vert _{1}. \label{11}%
\end{equation}

\end{corollary}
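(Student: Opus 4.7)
The plan is to deduce this corollary directly from Theorem 5 by specializing the parameter $x$ inside the admissible interval $\left[a+h\frac{b-a}{2},\,b-h\frac{b-a}{2}\right]$. The natural choice is $x=\frac{a+b}{2}$, which is the midpoint and clearly lies in this interval for every $h\in[0,1]$. First I would verify that this substitution is admissible (trivial, since $\frac{a+b}{2}$ is equidistant from both endpoints of the interval).

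Next, I would simplify each occurrence of $x$ in the master inequality (\ref{5}). The crucial simplification is that $x-\frac{a+b}{2}=0$ at the midpoint, which immediately kills the term $-(1-h)\bigl(x-\frac{a+b}{2}\bigr)f^{\prime}(x)$ on the left-hand side and reduces the bracketed quantity on the right-hand side to $\frac{1}{2}(b-a)(1-h)$. Substituting these into (\ref{5}) gives
\begin{align*}
&\left|(1-h)f\!\left(\tfrac{a+b}{2}\right)+\tfrac{h}{2}\bigl(f(a)+f(b)\bigr)-\tfrac{h^{2}(b-a)}{8}\bigl(f^{\prime}(b)-f^{\prime}(a)\bigr)-\tfrac{1}{b-a}\!\int_{a}^{b}\!f(t)\,dt\right|\\
&\qquad\leq\frac{1}{2(b-a)}\cdot\frac{(b-a)^{2}(1-h)^{2}}{4}\,\|f^{\prime\prime}\|_{1}=\frac{(b-a)(1-h)^{2}}{8}\,\|f^{\prime\prime}\|_{1},
\end{align*}
which is exactly (\ref{10}). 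Finally, setting $h=0$ eliminates the boundary terms $\frac{h}{2}(f(a)+f(b))$ and $-\frac{h^{2}(b-a)}{8}(f^{\prime}(b)-f^{\prime}(a))$ and collapses $(1-h)^{2}$ to $1$, yielding the classical midpoint inequality (\ref{11}).

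There is no real obstacle here: the corollary is a direct specialization, so the entire task reduces to careful bookkeeping of the vanishing term $x-\frac{a+b}{2}$ and simplification of the square on the right-hand side. The only point that warrants a brief sentence in the write-up is confirming that $\frac{a+b}{2}\in\left[a+h\frac{b-a}{2},\,b-h\frac{b-a}{2}\right]$ for every $h\in[0,1]$, so that the hypothesis of Theorem 5 is satisfied before the substitution is made.
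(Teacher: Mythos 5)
Your proposal is correct and is exactly the intended argument: the corollary follows from Theorem 5 by setting $x=\frac{a+b}{2}$, which annihilates the term $(1-h)\left(x-\frac{a+b}{2}\right)f^{\prime}(x)$ and reduces the bound to $\frac{1}{2(b-a)}\left[\frac{1}{2}(b-a)(1-h)\right]^{2}\Vert f^{\prime\prime}\Vert_{1}=\frac{1}{8}(b-a)(1-h)^{2}\Vert f^{\prime\prime}\Vert_{1}$, with $h=0$ then giving (\ref{11}). Your bookkeeping and the check that the midpoint lies in the admissible interval are both accurate.
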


\begin{corollary}
Let $f$ \ be as in Theorem $5$, then%
\begin{align}
&  \left\vert
\begin{array}
[c]{c}%
\left(  1-h\right)  \frac{\text{ }f(a)+f(b)}{2}-\left(  1-h\right)  \left(
b-a\right)  \frac{f^{\text{ }\prime}\left(  b\right)  -f^{\text{ }\prime
}\left(  a\right)  }{4}+\frac{h}{2}\left(  f\left(  a\right)  +f\left(
b\right)  \right) \\
-\frac{h^{2}\left(  b-a\right)  }{8}\left(  f^{\text{ }\prime}\left(
b\right)  -f^{\text{ }\prime}\left(  a\right)  \right)  -\frac{1}{b-a}%
\int\limits_{a}^{b}f(t)dt
\end{array}
\right\vert \nonumber\\
&  \leq\frac{\left(  b-a\right)  }{2}\left(  1-\frac{h}{2}\right)
^{2}\left\Vert f^{\text{ }\prime\prime}\right\Vert _{1}\nonumber
\end{align}
holds.
\end{corollary}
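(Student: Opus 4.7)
My plan is to obtain this corollary as a direct consequence of Theorem~5 by instantiating the main inequality at $x=a$ and at $x=b$ and then averaging the two resulting estimates. The choice is essentially forced by the shape of the target: only at $x=a$ and $x=b$ does the factor $(1-h)\,f(x)$ supply $f(a)$ or $f(b)$, and only then does $(1-h)\bigl(x-\tfrac{a+b}{2}\bigr)f'(x)$ reduce to $\mp(1-h)\tfrac{b-a}{2}f'(a)$ or $\pm(1-h)\tfrac{b-a}{2}f'(b)$, which after averaging produce the trapezoid combination $(1-h)\tfrac{f(a)+f(b)}{2}-(1-h)(b-a)\tfrac{f'(b)-f'(a)}{4}$ appearing on the left-hand side.

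Concretely, I would first substitute $x=a$ into (\ref{5}). Since $x-\tfrac{a+b}{2}=-\tfrac{b-a}{2}$ and $\bigl|x-\tfrac{a+b}{2}\bigr|=\tfrac{b-a}{2}$, the expression inside the absolute value becomes $(1-h)f(a)+(1-h)\tfrac{b-a}{2}f'(a)+\tfrac{h}{2}(f(a)+f(b))-\tfrac{h^{2}(b-a)}{8}(f'(b)-f'(a))-\tfrac{1}{b-a}\int_{a}^{b}f(t)\,dt$, while the right-hand side simplifies to $\tfrac{b-a}{2}\bigl(1-\tfrac{h}{2}\bigr)^{2}\|f''\|_{1}$. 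Substituting $x=b$ yields the symmetric statement with $a$ and $b$ interchanged and the opposite sign on the $f'$-terms, and with the same right-hand bound. Adding the two inequalities, dividing by $2$, and invoking the triangle inequality gives exactly the claim.

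The only genuine obstacle is that Theorem~5 is formally stated for $x\in[a+h\tfrac{b-a}{2},b-h\tfrac{b-a}{2}]$, which excludes the endpoints once $h>0$. I would bypass this by observing that the central identity (\ref{6}) comes from two integrations by parts against the kernel $K(x,t)$ and is valid for every $x\in[a,b]$; indeed at $x=a$ the first branch of $K$ has empty support and at $x=b$ the second branch does, so the computation is actually simpler at the endpoints than in the interior. The corresponding bound (\ref{5}) then follows at $x=a$ and $x=b$ by the same triangle inequality used in the proof of Theorem~5, and the extension to the endpoints introduces no new difficulty.
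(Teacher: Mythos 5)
Your proposal is correct and follows exactly the paper's own route: substitute $x=a$ and $x=b$ into (\ref{5}), add the two inequalities, apply the triangle inequality, and divide by $2$. Your additional observation that $x=a,b$ fall outside the stated domain $\left[a+h\frac{b-a}{2},\,b-h\frac{b-a}{2}\right]$ for $h>0$, and that the identity (\ref{6}) nevertheless holds at the endpoints because one branch of the kernel $K$ degenerates there, is a genuine point of care that the paper's one-line proof silently skips over.
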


\begin{proof}
Put $x=a$ and $x=b$ in (\ref{5}), summing up the obtained inequalities, using
the triangle inequality and dividing by 2, we get the required inequality
(\ref{12}).
\end{proof}

\begin{corollary}
Let $f$ \ be as in Theorem $5$, then we have the perturbed trapezoidal
inequality:%
\begin{align}
&  \left\vert \frac{\text{ }f(a)+f(b)}{2}-\left(  b-a\right)  \frac{f^{\text{
}\prime}\left(  b\right)  -f^{\text{ }\prime}\left(  a\right)  }{4}-\frac
{1}{b-a}\int\limits_{a}^{b}f(t)dt\right\vert \label{13}\\
&  \leq\frac{1}{6}\left(  b-a\right)  ^{2}\left\Vert f^{\text{ }\prime\prime
}\right\Vert _{\infty}.\nonumber
\end{align}

\end{corollary}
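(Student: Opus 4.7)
The plan is to specialize identity (\ref{6}) to $h=0$, evaluate it at the two endpoints $x=a$ and $x=b$, and average the two resulting identities. At $h=0$ the kernel $K$ reduces to $K(a,t)=\tfrac{1}{2}(t-b)^{2}$ on $(a,b]$ (the piece on $[a,a]$ being empty) and $K(b,t)=\tfrac{1}{2}(t-a)^{2}$ on $[a,b)$. Writing out the two instances of (\ref{6}), summing, and dividing by $2(b-a)$ gives
\[
\frac{f(a)+f(b)}{2} - \frac{(b-a)}{4}\bigl(f^{\prime}(b)-f^{\prime}(a)\bigr) - \frac{1}{b-a}\int_{a}^{b} f(t)\,dt = -\frac{1}{4(b-a)}\int_{a}^{b}\bigl[(t-a)^{2}+(t-b)^{2}\bigr] f^{\prime\prime}(t)\,dt,
\]
so that the left-hand side of (\ref{13}) is expressed exactly as a remainder integral against $f^{\prime\prime}$.

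Next I would pull $\|f^{\prime\prime}\|_{\infty}$ outside the remainder integral and compute the resulting kernel integral explicitly,
\[
\int_{a}^{b}\bigl[(t-a)^{2}+(t-b)^{2}\bigr]\,dt = \frac{2(b-a)^{3}}{3}.
\]
Combining the two, the remainder is bounded in absolute value by $\dfrac{\|f^{\prime\prime}\|_{\infty}}{4(b-a)}\cdot\dfrac{2(b-a)^{3}}{3}=\dfrac{(b-a)^{2}}{6}\|f^{\prime\prime}\|_{\infty}$, which is precisely (\ref{13}).

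The main subtle point is the choice of estimation. One cannot simply invoke Corollary 9 at $h=0$ and then convert its $L_{1}$ bound through $\|f^{\prime\prime}\|_{1}\leq (b-a)\|f^{\prime\prime}\|_{\infty}$: that shortcut only yields the coarser constant $\tfrac{1}{2}$ in place of $\tfrac{1}{6}$. The sharper constant is obtained precisely because, with $f^{\prime\prime}$ bounded uniformly rather than only in $L_{1}$, one can keep the full kernel $(t-a)^{2}+(t-b)^{2}$ under the integral sign and integrate it exactly, instead of being forced (as in the $L_{1}$ estimates of the earlier corollaries) to replace the kernel by its pointwise maximum over $[a,b]$. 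Once this observation is in place, the rest of the argument is a short and direct calculation.
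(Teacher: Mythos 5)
Your proof is correct, and it takes a genuinely different --- and in fact more substantive --- route than the paper. The paper's entire proof of (\ref{13}) is the single line ``Put $h=0$ in (\ref{12})'', i.e.\ specialize the preceding corollary, whose right-hand side at $h=0$ is $\tfrac{b-a}{2}\left\Vert f^{\prime\prime}\right\Vert _{1}$; as you observe, even after the conversion $\left\Vert f^{\prime\prime}\right\Vert _{1}\leq (b-a)\left\Vert f^{\prime\prime}\right\Vert _{\infty}$ this only yields the constant $\tfrac{1}{2}$, so the paper's argument does not actually establish the stated bound with $\tfrac{1}{6}$. Your argument --- writing the two instances of identity (\ref{6}) at $x=a$ and $x=b$ with $h=0$, averaging to get the exact remainder representation
\[
\frac{f(a)+f(b)}{2}-\frac{b-a}{4}\bigl(f^{\prime}(b)-f^{\prime}(a)\bigr)-\frac{1}{b-a}\int_{a}^{b}f(t)\,dt=-\frac{1}{4(b-a)}\int_{a}^{b}\bigl[(t-a)^{2}+(t-b)^{2}\bigr]f^{\prime\prime}(t)\,dt,
\]
and then integrating the nonnegative kernel exactly rather than bounding it by its supremum --- is precisely what is needed to reach $\tfrac{1}{6}(b-a)^{2}\left\Vert f^{\prime\prime}\right\Vert _{\infty}$. (A quick check with $f(t)=t^{2}$ on $[0,1]$ gives $\tfrac{1}{3}$ on both sides, so your constant is even attained.) The only cosmetic issues are the reference to ``Corollary 9'' (you mean the immediately preceding corollary) and that your averaging step needs no triangle inequality since you keep the exact identity; both are harmless.
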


\begin{proof}
Put $h=0,$ in (\ref{12}).
\end{proof}

\begin{remark}
The estimation provided by (\ref{12}), is similar to that of the classical
trapezoidal inequality$.$
\end{remark}

\section{\textbf{Applications in Numerical integration}}

Let $I_{n}:$ $a=x_{0}<x_{1}<x_{2}<....<x_{n-1}<x_{n}=b$ be a division of the
interval $[a,b]$ , \ $\xi_{i}\in\left[  x_{i}+\delta\frac{h_{i}}{2}%
,x_{i+1}-\delta\frac{h_{i}}{2}\right]  ,$ $\left(  i=0,1,.....,n-1\right)  $ a
sequence of intermediate points and $h_{i}=x_{i+1}-x_{i}$ , $\left(
i=0,1,.....,n-1\right)  .$ then we have the following quadrature rule:

\begin{theorem}
\textit{Let \ }$\mathit{\ }f:[a,b]\rightarrow%
\mathbb{R}
$ \textit{be a} twice differentiable on $\left(  a,b\right)  $ whose second
derivative and $f^{\text{ }\prime\prime}:\left(  a,b\right)  \rightarrow%
\mathbb{R}
$ belongs to $L_{1}\left(  a,b\right)  $, i.e $\left\Vert f^{\prime\prime
}\right\Vert _{1}:=\int\limits_{a}^{b}\left\Vert f^{\prime\prime}\right\Vert
dt<\infty.$ Then the perturbed Riemann's quadrature formula holds:%
\begin{equation}
\int\limits_{a}^{b}f(t)dt=A\left(  f,f^{%
\acute{}%
},I_{n},\xi,\delta\right)  +R\left(  f,f^{%
\acute{}%
},I_{n},\xi,\delta\right)  , \label{14}%
\end{equation}
where%
\begin{align}
&  A\left(  f,f^{%
\acute{}%
},I_{n},\xi,\delta\right) \label{15}\\
&  =\left(  1-\delta\right)  \underset{i=0}{\overset{n-1}{\sum}}h_{i}f(\xi
_{i})-\left(  1-\delta\right)  \underset{i=0}{\overset{n-1}{\sum}}h_{i}\left(
\xi_{i}-\frac{x_{i}+x_{i+1}}{2}\right)  f^{%
\acute{}%
}(\xi_{i})\nonumber\\
&  +\frac{\delta}{2}\underset{i=0}{\overset{n-1}{\sum}}h_{i}\left(  f\left(
x_{i}\right)  +f\left(  x_{i+1}\right)  \right)  -\frac{\delta^{2}}%
{8}\underset{i=0}{\overset{n-1}{\sum}}h_{i}^{2}\left(  f^{%
\acute{}%
}\left(  x_{i+1}\right)  -f^{%
\acute{}%
}\left(  x_{i}\right)  \right) \nonumber
\end{align}
and the remainder $R\left(  f,f^{%
\acute{}%
},I_{n},\xi,\delta\right)  $ satisfies the estimation:%
\begin{align}
&  \left\vert R\left(  f,f^{%
\acute{}%
},I_{n},\xi,\delta\right)  \right\vert \label{16}\\
&  \leq\frac{1}{2}\underset{i=0}{\overset{n-1}{\sum}}\left[  \frac
{h_{i}\left(  1-\delta\right)  }{2}+\left\vert \xi_{i}-\frac{x_{i}+x_{i+1}}%
{2}\right\vert \right]  ^{2}\left\Vert f^{\text{ }\prime\prime}\right\Vert
_{1}\nonumber\\
&  \leq\left(  1-\frac{\delta}{2}\right)  ^{2}%
\underset{i=0}{\overset{n-1}{\sum}}\frac{h_{i}^{2}}{2}\left\Vert f^{\text{
}\prime\prime}\right\Vert _{1}\nonumber
\end{align}
where $\delta\in\left[  0,1\right]  $ and $x_{i}+\delta\frac{h_{i}}{2}\leq
\xi_{i}\leq x_{i+1}-\delta\frac{h_{i}}{2}.$
\end{theorem}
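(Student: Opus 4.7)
The plan is to apply Theorem 5 on each subinterval $[x_i,x_{i+1}]$ of the partition $I_n$, using $\xi_i$ in place of the free point $x$ and $\delta$ in place of the parameter $h$. The hypothesis $\xi_i\in\bigl[x_i+\delta\tfrac{h_i}{2},\,x_{i+1}-\delta\tfrac{h_i}{2}\bigr]$ built into the statement is precisely the admissibility constraint required by Theorem 5 on that subinterval, so the theorem applies directly on each piece.

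Concretely, I would first rewrite the conclusion of Theorem 5 as the exact identity (\ref{6}) on $[x_i,x_{i+1}]$, i.e.\ multiply the Theorem 5 inequality through by $h_i=x_{i+1}-x_i$ so that the integral $\int_{x_i}^{x_{i+1}} f(t)\,dt$ appears on the left. This yields, for each $i$, an exact equation
\begin{equation*}
\int_{x_i}^{x_{i+1}} f(t)\,dt = T_i(\xi_i,\delta) + R_i,
\end{equation*}
where $T_i$ collects the four local contributions $(1-\delta)h_i f(\xi_i)$, $-(1-\delta)h_i(\xi_i-\tfrac{x_i+x_{i+1}}{2})f'(\xi_i)$, $\tfrac{\delta h_i}{2}(f(x_i)+f(x_{i+1}))$, and $-\tfrac{\delta^2 h_i^2}{8}(f'(x_{i+1})-f'(x_i))$, and where the local remainder $R_i$ admits the bound
\begin{equation*}
|R_i|\leq \frac{1}{2}\Bigl[\tfrac{h_i(1-\delta)}{2}+\bigl|\xi_i-\tfrac{x_i+x_{i+1}}{2}\bigr|\Bigr]^2\,\|f''\|_{L_1[x_i,x_{i+1}]}.
\end{equation*}

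Next I would sum these identities over $i=0,1,\dots,n-1$. The left-hand sides telescope to $\int_a^b f(t)\,dt$ by additivity of the integral on the partition, and the right-hand sides assemble exactly into $A(f,f',I_n,\xi,\delta)$ as defined in (\ref{15}) plus the global remainder $R=\sum_i R_i$. This produces the quadrature identity (\ref{14}). Applying the triangle inequality to $R$ and plugging in the $|R_i|$ bounds gives the first estimate in (\ref{16}). For the second (coarser) estimate, I would instead invoke the second line of the bound in Theorem 5 on each subinterval, which yields $|R_i|\leq \tfrac{h_i^2}{2}(1-\tfrac{\delta}{2})^2\|f''\|_{L_1[x_i,x_{i+1}]}$; summing and using $\sum_i\|f''\|_{L_1[x_i,x_{i+1}]}=\|f''\|_1$ (additivity of the $L_1$ norm) produces the claimed bound.

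There is no genuine obstacle here: the argument is essentially a bookkeeping exercise that glues together local instances of Theorem 5. The only point requiring mild care is the treatment of the $L_1$ norm in the final bound, namely that the local norms on the subintervals add up to the full $\|f''\|_1$ on $[a,b]$; once this observation is made, both displayed bounds in (\ref{16}) follow immediately from the per-subinterval estimates of Theorem 5.
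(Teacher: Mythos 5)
Your proposal is correct and follows essentially the same route as the paper: apply Theorem 5 on each $[x_i,x_{i+1}]$ with $x=\xi_i$ and $h=\delta$, sum over $i$, and use the generalized triangle inequality. Your remark about using the local norms $\left\Vert f''\right\Vert_{L_1[x_i,x_{i+1}]}$ and their additivity is in fact slightly more careful than the paper, which writes the global $\left\Vert f''\right\Vert_1$ in each summand.
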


\begin{proof}
Apply Theorem 5 on the interval $[x_{i},x_{i+1}]$, $\left(
i=0,1,....n-1\right)  ,$ gives%
\begin{align*}
&  \left\vert
\begin{array}
[c]{c}%
\left(  1-\delta\right)  h_{i}f(\xi_{i})-\left(  1-\delta\right)  h_{i}\left(
\xi_{i}-\frac{x_{i}+x_{i+1}}{2}\right)  f^{%
\acute{}%
}(\xi_{i})+\frac{\delta}{2}h_{i}\left(  f\left(  x_{i}\right)  +f\left(
x_{i+1}\right)  \right) \\
\\
-\frac{\delta^{2}}{8}h_{i}^{2}\left(  f^{%
\acute{}%
}\left(  x_{i+1}\right)  -f^{%
\acute{}%
}\left(  x_{i}\right)  \right)  -\int\limits_{x_{i}}^{x_{i+1}}f(t)dt\\
\end{array}
\right\vert \\
&  \leq\frac{1}{2}\left[  \frac{h_{i}\left(  1-\delta\right)  }{2}+\left\vert
\xi_{i}-\frac{x_{i}+x_{i+1}}{2}\right\vert \right]  ^{2}\left\Vert f^{\text{
}\prime\prime}\right\Vert _{1}\\
&  \leq\left(  1-\frac{\delta}{2}\right)  ^{2}\frac{h_{i}^{2}}{2}\left\Vert
f^{\text{ }\prime\prime}\right\Vert _{1}%
\end{align*}
for any choice $\xi$ of the intermediate points.
Summing over $i$ from $0$ to $n-1$ and using the generalized triangular
inequality, we deduce the desired estimation (\ref{16}).
\end{proof}

\begin{corollary}
\bigskip The following perturbed midpoint rule holds:%
\[
\int\limits_{a}^{b}f(x)dx=M\left(  f,f^{%
\acute{}%
},I_{n}\right)  +R_{M}\left(  f,f^{%
\acute{}%
},I_{n}\right)
\]
where%
\begin{equation}
M\left(  f,f^{%
\acute{}%
},I_{n}\right)  =\underset{i=0}{\overset{n-1}{\sum}}h_{i}f\left(  \frac
{x_{i}+x_{i+1}}{2}\right)  \label{17}%
\end{equation}
and the remainder term $R_{M}\left(  f,f^{%
\acute{}%
},I_{n}\right)  $ satisfies the estimation:%
\begin{equation}
\left\vert R_{M}\left(  f,f^{%
\acute{}%
},I_{n}\right)  \right\vert \leq\left\Vert f^{\text{ }\prime\prime}\right\Vert
_{1}\underset{i=0}{\overset{n-1}{\sum}}\frac{h_{i}^{2}}{8}. \label{18}%
\end{equation}

\end{corollary}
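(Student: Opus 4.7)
The plan is to derive this corollary as a direct specialization of Theorem 8 by making two concrete parameter choices: take $\delta=0$ in the general quadrature rule (\ref{14})--(\ref{16}), and take each intermediate point $\xi_i$ to be the midpoint $\frac{x_i+x_{i+1}}{2}$ of its subinterval. First I would verify the admissibility condition $x_i+\delta\frac{h_i}{2}\leq\xi_i\leq x_{i+1}-\delta\frac{h_i}{2}$: with $\delta=0$ this reduces to $x_i\leq\xi_i\leq x_{i+1}$, which trivially holds for the midpoint, so the hypotheses of Theorem 8 apply.

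Next, I would substitute these choices into the definition (\ref{15}) of $A(f,f',I_n,\xi,\delta)$. Setting $\delta=0$ annihilates the last two sums (the trapezoidal-type term and the derivative-jump term). The choice $\xi_i=\frac{x_i+x_{i+1}}{2}$ makes the factor $\bigl(\xi_i-\frac{x_i+x_{i+1}}{2}\bigr)$ vanish, so the correction sum involving $f'(\xi_i)$ also drops out. What remains is
\[
A\bigl(f,f',I_n,\xi,0\bigr)=\sum_{i=0}^{n-1}h_i\, f\!\left(\frac{x_i+x_{i+1}}{2}\right),
\]
which is precisely $M(f,f',I_n)$ as defined in (\ref{17}). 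This identifies the quadrature rule with the midpoint rule and shows the leading-order identity.

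Finally, I would substitute the same choices into the remainder estimate (\ref{16}). Each summand
\[
\frac{1}{2}\left[\frac{h_i(1-\delta)}{2}+\left|\xi_i-\frac{x_i+x_{i+1}}{2}\right|\right]^2\!\|f''\|_1
\]
collapses, with $\delta=0$ and midpoint $\xi_i$, to $\frac{1}{2}\cdot\frac{h_i^2}{4}\|f''\|_1=\frac{h_i^2}{8}\|f''\|_1$. Summing over $i$ yields exactly the bound (\ref{18}). There is essentially no obstacle: the corollary is a bookkeeping specialization of Theorem 8, and the only thing to watch is that the algebraic simplification of both $A$ and the remainder is carried out consistently with $\delta=0$ and $\xi_i$ at the midpoint, which together cause three of the four extra terms in (\ref{15}) to vanish.
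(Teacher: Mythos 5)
Your proposal is correct and is exactly the intended derivation: the paper states this corollary without a written proof, and the evident route is the one you take, namely specializing Theorem 8 with $\delta=0$ and $\xi_i=\frac{x_i+x_{i+1}}{2}$, which kills the three extra terms in (\ref{15}) and collapses each summand of (\ref{16}) to $\frac{h_i^2}{8}\left\Vert f^{\prime\prime}\right\Vert_1$. Your check of the admissibility condition and the algebra both match what the paper's framework requires.
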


\begin{corollary}
The following perturbed trapezoidal rule holds:%
\begin{equation}
\int\limits_{a}^{b}f(x)dx=T\left(  f,f^{%
\acute{}%
},I_{n}\right)  +R_{T}\left(  f,f^{%
\acute{}%
},I_{n}\right)  \label{19}%
\end{equation}
where%
\begin{equation}
T\left(  f,f^{%
\acute{}%
},I_{n}\right)  =\frac{1}{2}\underset{i=0}{\overset{n-1}{\sum}}h_{i}\left(
f\left(  x_{i}\right)  +f\left(  x_{i+1}\right)  \right)  -\frac{1}%
{8}\underset{i=0}{\overset{n-1}{\sum}}h_{i}^{2}\left(  f^{%
\acute{}%
}\left(  x_{i+1}\right)  -f^{%
\acute{}%
}\left(  x_{i}\right)  \right)  \label{20}%
\end{equation}
and the remainder term%
\begin{equation}
\left\vert R_{T}\left(  f,f^{%
\acute{}%
},I_{n}\right)  \right\vert \leq\underset{i=0}{\overset{n-1}{\sum}}\frac
{h_{i}^{2}}{8}\left\Vert f^{\text{ }\prime\prime}\right\Vert _{1}. \label{21}%
\end{equation}

\end{corollary}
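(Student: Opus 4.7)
The plan is to specialize the perturbed Riemann quadrature identity of Theorem 8 to the extreme parameter value $\delta=1$. The key preliminary observation is that at $\delta=1$ the admissible window $[x_i+\delta h_i/2,\,x_{i+1}-\delta h_i/2]$ collapses to a single point, since $x_{i+1}-h_i/2 = x_i+h_i/2$; the intermediate node is thus forced to be $\xi_i=(x_i+x_{i+1})/2$ for every $i$ and no genuine freedom in the sequence $\xi$ survives.

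Next I would substitute $\delta=1$ into the expression (15) for $A(f,f',I_n,\xi,\delta)$: the two sums carrying the prefactor $(1-\delta)$ vanish, and the remaining two reduce to
\[
\tfrac{1}{2}\sum_{i=0}^{n-1} h_i\left(f(x_i)+f(x_{i+1})\right) \;-\; \tfrac{1}{8}\sum_{i=0}^{n-1} h_i^{2}\left(f'(x_{i+1})-f'(x_i)\right),
\]
which is exactly $T(f,f',I_n)$ as defined in (20). Declaring $R_T := R(f,f',I_n,\xi,1)$ then gives the identity (19) with no further work. For the error bound I would feed $\delta=1$ into the second (cruder) estimate in (16): since $(1-\delta/2)^2 = 1/4$, one obtains
\[
|R_T| \;\le\; \tfrac{1}{4}\sum_{i=0}^{n-1}\tfrac{h_i^{2}}{2}\left\Vert f''\right\Vert_1 \;=\; \sum_{i=0}^{n-1}\tfrac{h_i^{2}}{8}\left\Vert f''\right\Vert_1,
\]
which is precisely the claimed inequality (21).

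Because the entire argument is a one-shot specialization of an already-proven quadrature theorem, there is no real obstacle. The only point worth a line of care is the degeneracy at $\delta=1$: one must verify that the admissible window really does collapse to the midpoint (so that $A$ is unambiguously determined) and that with this value of $\xi_i$ the formula (15) does reproduce $T$ --- both are immediate by inspection. Note also that one must use the second, not the first, bound of (16) here, since the first bound contains a factor $\bigl[h_i(1-\delta)/2+|\xi_i-(x_i+x_{i+1})/2|\bigr]^2$ that becomes $0$ at $\delta=1,\ \xi_i=(x_i+x_{i+1})/2$ and therefore is not strong enough on its own to control $R_T$.
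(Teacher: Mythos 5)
Your proof is correct and matches the paper's (implicit) derivation: the corollary is exactly Theorem 8 specialized to $\delta=1$, where the admissible window forces $\xi_i=\tfrac{x_i+x_{i+1}}{2}$, the $(1-\delta)$ terms in (15) drop out to leave $T(f,f',I_n)$, and the bound (21) comes from the second estimate in (16) via $(1-\tfrac{1}{2})^2\cdot\tfrac{h_i^2}{2}=\tfrac{h_i^2}{8}$. Your closing aside is slightly misphrased --- the first bound of (16) collapsing to $0$ at $\delta=1$ would be \emph{too} strong rather than too weak (it signals a degeneracy of that estimate at the endpoint), but this does not affect your argument, which correctly relies only on the second estimate.
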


\begin{remark}
Note that the above mentioned perturbed midpoint formula (\ref{17}) and
perturbed trapezoid formula (\ref{20}) can give better approximations of the
integral $\int\limits_{a}^{b}f(x)dx$ for general classes of mappings.
\end{remark}

\section{ Special Means}

We may now apply inequality (\ref{5})$,$ to deduce some inequalities for
special means \cite{15} at [P. 1896] by the use of particular mappings as follows:

\begin{remark}
Consider mapping $f:\left(  0,\infty\right)  \rightarrow%
\mathbb{R}
,$ $f(x)=x^{r}\ ,r\in%
\mathbb{R}
\backslash\left\{  -1,0\right\}  ,$ then we have for \ $0<a<b,$%
\[
\frac{1}{b-a}\int\limits_{a}^{b}f(t)dt=L_{r}^{r}\left(  a,b\right)
\]
Using the inequality (\ref{5}) we get:%
\begin{align}
&  \left\vert
\begin{array}
[c]{c}%
\left(  1-h\right)  x^{r}-\left(  1-h\right)  \left(  x-A\right)  rx^{r-1}\\
+\frac{h}{2}\left(  a^{r}+b^{r}\right)  -\frac{h^{2}\left(  b-a\right)  r}%
{8}\left[  b^{r-1}-a^{r-1}\right]  -L_{r}^{r}\left(  a,b\right)
\end{array}
\right\vert \label{22}\\
& \nonumber\\
&  \leq\frac{1}{2}\left[  \frac{1}{2}\left(  b-a\right)  \left(  1-h\right)
+\left\vert x-A\right\vert \right]  ^{2}\left\vert r\left(  r-1\right)
L_{r-1}^{r-1}\left(  a,b\right)  \right\vert .\nonumber
\end{align}
If in (\ref{22}), we choose $x=A,$ we get%
\begin{align}
&  \left\vert \left(  1-h\right)  A^{r}+\frac{h}{2}\left(  a^{r}+b^{r}\right)
-\frac{h^{2}\left(  b-a\right)  r}{8}\left[  b^{r-1}-a^{r-1}\right]
-L_{r}^{r}\left(  a,b\right)  \right\vert \label{22-1}\\
&  \leq\frac{1}{8}\left[  \left(  b-a\right)  \left(  1-h\right)  \right]
^{2}\left\vert r\left(  r-1\right)  L_{r-1}^{r-1}\left(  a,b\right)
\right\vert \nonumber
\end{align}
\bigskip also choosing $h=0$ in (\ref{22-1}), we get%
\begin{equation}
\left\vert A^{r}-L_{r}^{r}\left(  a,b\right)  \right\vert \leq\frac{1}%
{8}\left(  b-a\right)  ^{2}\left\vert r\left(  r-1\right)  L_{r-1}%
^{r-1}\left(  a,b\right)  \right\vert . \label{24}%
\end{equation}

\end{remark}

\begin{remark}
Consider the mapping $f:\left(  0,\infty\right)  \rightarrow%
\mathbb{R}
,$ \ $f(x)=\frac{1}{x}\ .$

Then we have for \ $0<a<b,$%
\[
\frac{1}{b-a}\int\limits_{a}^{b}f(t)dt=L^{-1}\left(  a,b\right)
\]
Using the inequality (\ref{5}) we get:%
\begin{align}
&  \left\vert \left(  1-h\right)  \text{ }\frac{1}{x}+\left(  1-h\right)
\left(  x-A\right)  \frac{1}{x^{2}}+\frac{h}{H}\ -\frac{h^{2}\left(
b-a\right)  }{8}\left(  \frac{b^{2}-a^{2}}{a^{2}b^{2}}\right)  -L^{-1}\left(
a,b\right)  \right\vert \label{24-1}\\
&  \leq\left[  \frac{1}{2}\left(  b-a\right)  \left(  1-h\right)  +\left\vert
x-A\right\vert \right]  ^{2}L_{-3}^{-3}\left(  a,b\right)  .\nonumber
\end{align}
If in (\ref{24-1}), we choose $x=A,$ we get%
\begin{align}
&  \left\vert \left(  1-h\right)  \text{ }\frac{1}{A}+\frac{h}{H}%
\ -\frac{h^{2}\left(  b-a\right)  }{8}\left(  \frac{b^{2}-a^{2}}{a^{2}b^{2}%
}\right)  -L^{-1}\left(  a,b\right)  \right\vert \label{24-2}\\
&  \leq\frac{1}{4}\left(  b-a\right)  ^{2}\left(  1-h\right)  ^{2}L_{-3}%
^{-3}\left(  a,b\right) \nonumber
\end{align}
also choosing $h=0$ in (\ref{24-2}), we get%
\begin{equation}
\left\vert \text{ }A^{-1}\ -L^{-1}\left(  a,b\right)  \right\vert \leq\frac
{1}{4}\left(  b-a\right)  ^{2}L_{-3}^{-3}\left(  a,b\right)  . \label{27}%
\end{equation}

\end{remark}

\begin{remark}
Let us consider\ the mapping\ \ $f(x)=\ln x$ $,$\ $x\in\left[  a,b\right]  $
$\subset\left(  0,\infty\right)  .$

Then we have:%
\[
\frac{1}{b-a}\int\limits_{a}^{b}f(t)dt=\ln I\left(  a,b\right)
\]
Using the inequality (\ref{5}) we get:%
\begin{align}
&  \left\vert
\begin{array}
[c]{c}%
\left(  1-h\right)  \text{ }\ln x-\left(  1-h\right)  \left(  x-A\right)
\frac{1}{x}+\frac{h}{2}\left(  \ln a+\ln b\right) \\
+\frac{h^{2}\left(  b-a\right)  ^{2}}{8ab}-\ln I\left(  a,b\right)  .
\end{array}
\right\vert \label{28}\\
& \nonumber\\
&  \leq\frac{1}{2}\left[  \frac{1}{2}\left(  b-a\right)  \left(  1-h\right)
+\left\vert x-A\right\vert \right]  ^{2}L_{-2}^{-2}\left(  a,b\right)
.\nonumber
\end{align}
If in (\ref{28}), we choose $x=A,$ we get%
\begin{align}
&  \left\vert \left(  1-h\right)  \text{ }\ln A-\frac{h}{2}\left(  \ln a+\ln
b\right)  +\frac{h^{2}\left(  b-a\right)  ^{2}}{8ab}-\ln I\left(  a,b\right)
\right\vert \label{28-1}\\
&  \leq\frac{1}{8}\left(  b-a\right)  ^{2}\left(  1-h\right)  ^{2}L_{-2}%
^{-2}\left(  a,b\right) \nonumber
\end{align}
also choosing $h=0$ in (\ref{28-1}), we get%
\begin{equation}
\left\vert \frac{A}{I}\right\vert \leq\exp\frac{\left(  b-a\right)  ^{2}}%
{8}L_{-2}^{-2}\left(  a,b\right)  . \label{30}%
\end{equation}

\end{remark}

\section{Conclusion}

We established generalized Ostrowski type inequality for bounded
differentiable mappings which generalizes the previous inequalities developed
and discussed in \cite{6},\cite{7},\cite{10}- \cite{12} . Perturbed midpoint
and trapezoid inequalities are obtained. Some closely new results are also
given.These generalized inequalities add up to the literature in the sense
that they have immediate applications in Numerical Integration and Special
Means. These generalized inequalities will also be useful for the researchers
working in the field of the approximation theory, applied mathematics,
probability theory, stochastic and numerical analysis to solve their problems
in engineering and in practical life.


\begin{thebibliography}{99}                                                                                               %


\bibitem {1}G. H. Hardy, J. E. Littlewood and G. Polya, Inequalities,
Cambridge University Press, 1934.

\bibitem {2}E. F. Beckenbach and R. Bellman, Springer-Verlag,
Berlin-Gottinggon-Heidelberg, 1961.

\bibitem {3}D. S. Mitrinovi%
\'{}%
c , J. E. Pecari%
\'{}%
c , and A. M. Fink. \textquotedblleft Classical and New Inequalities in
Analysis\textquotedblright. Kluwer Academic Publishers, Dordrecht (1991).

\bibitem {4}D. S. Mitrinovi%
\'{}%
c , J. E. Pecari%
\'{}%
c, and A. M. Fink. \textquotedblleft Inequalities Involving Functions and
their Integrals and Derivatives\textquotedblright. Kluwer Academic Publishers,
Dordrecht (1991).

\bibitem {5}A. Ostrowski, Uber die Absolutabweichung einer differentien baren
Funktionen von ihren Integralimittelwert, Comment. Math. Hel. 10(1938), 226-227.

\bibitem {6}S.S. Dragomir and S. Wang, A new inequality of Ostrowski's type in
L$_{1}$-norm and applications to some special means and to some numerical
quadrature rules, Tamkang J. of Math., 28 (1997), 239-244.

\bibitem {7}S.S. Dragomir and S. Wang, Applications of Ostrowski's inequality
to the estimation of error bounds for some special means and some numerical
quadrature rules, Appl. Math. Lett., 11 (1998), 105-109.

\bibitem {8}S.S. Dragomir, P. Cerone and J. Roumeliotis, A new generalization
of Ostrowski integral inequality for mappings whose derivatives are bounded
and applications in numerical integration and for special means, Appl. Math.
Lett., 13 (2000), 19--25.

\bibitem {9}A. Rafiq, N.A. Mir and Fiza Zafar, A Generalized
Ostrowski-Gr\"{u}ss Type Inequality for Twice Differentiable Mappings and
Applications, J. Ineq. Pure and Appl. Math. 7(4) Art. 124, 2006.

\bibitem {10}D. S. Mitrinovi%
\'{}%
c , J. E. Pecari%
\'{}%
c, and A. M. Fink, Inequalities for Functions and Their Integrals and
Derivatives. Kluwer Academic, Dordrecht, 1994.

\bibitem {11}J. Roumeliotis, Improved weighted Ostrowski Gr\"{u}ss type
inequalities. Inequality Theory and Applications (Eds. Y. J. Cho, Y. K. Kim
and S.S. Dragomir) Vol. 3, 153-160, Nova Sci. Publ., Hauppauge, NY, 2003.

\bibitem {12}N.S. Barnett, P. Cerone, S.S. Dragomir, J. Roumeliotis, A. Sofo,
A survey on Ostrowski type inequalities for twice differentiable mappings and
applications, Inequality Theory and Applications 1 (2001) 24--30\bigskip.

\bibitem {13}A. Qayyum and S. Hussain, A new generalized Ostrowski Gr\"{u}ss
type inequality and applications, Applied Mathematics Letters 25 (2012) 1875--1880.

\bibitem {14}A. Qayyum, M. Shoaib, A. E. Matouk, and M. A. Latif, On New
Generalized Ostrowski Type Integral Inequalities, Abstract and Applied
Analysis, Volume 2014, Article ID 275806.

\bibitem {15}A. Qayyum, M. Shoaib, and M. A. Latif, A generalized inequality
of ostrowski type for twice differentiable bounded mappings and applications,
Applied Mathematical Sciences, Vol. 8, 2014, no. 38, 1889 - 1901.
\end{thebibliography}
\end{document}